\documentclass[11pt]{article}

\usepackage[top=27mm, bottom=22mm, left=34mm, right=34mm]{geometry}
\usepackage{amssymb,amsthm, amsmath}
\usepackage{setspace,hyperref}
\usepackage[usenames,dvipsnames]{color}
\definecolor{darkblue}{rgb}{0.0,0.0,0.6}
\hypersetup{colorlinks=true,breaklinks=true,
            linkcolor=darkblue,urlcolor=darkblue,
            anchorcolor=darkblue,citecolor=darkblue}
            
\allowdisplaybreaks[4]

\theoremstyle{plain}
		\newtheorem{theorem}{Theorem}
		\newtheorem{lemma}[theorem]{Lemma}
		\newtheorem{proposition}[theorem]{Proposition}

		\theoremstyle{definition}

\theoremstyle{definition}
\newtheorem*{remark}{Remark}

\theoremstyle{remark}

\usepackage[capitalise, noabbrev]{cleveref}
\crefname{theorem}{Theorem}{Theorems}
\crefname{proposition}{Proposition}{Propositions}
\crefname{corollary}{Corollary}{Corollaries}
\crefname{lemma}{Lemma}{Lemmas}
\crefname{conjecture}{Conjecture}{Conjectures}
\crefname{definition}{Definition}{Definitions}
\crefname{example}{Example}{Examples}
\crefname{remark}{Remark}{Remarks}

\newcommand{\Z}{\mathbb{Z}}

\newcommand{\col}{\operatorname{col}}

\title{Refined upper bounds on Schur-like numbers}
\author{Swaroop Hegde, Andrew Lott, Giorgis Petridis, and Nagendar Reddy Ponagandla}
\date{}

\begin{document}

\maketitle

\begin{abstract}
For positive integers $r, m$ and $N$, every \(r\)-coloring of \(\{1, \dots, N\}\) contains a monochromatic solution to $x_1+\dots+x_{m+1}=y_1+\dots+y_m$ provided that $N \ge 3^r (r!)^{1/m}$, which is qualitatively optimal when $m$ is logarithmic in $r$.
\end{abstract}

\section{Introduction}

An influential theorem of Schur states that, for any positive integer $r$ and any $r$-coloring of $\{1, \dots, N\}$, there exists a monochromatic solution to the equation $x+y=z$ provided that $N$ is large enough in terms of $r$ \cite{Schur1917}. Estimating the least such $N$ as a function of $r$ is a well-known open problem, partly because no substantial improvements have been made since Schur's original paper over 100 years ago.

A specific generalization of Schur's theorem that has been studied recently is to consider the analogous problem for the equations $x_1+\cdots+x_{m+1}=y_1+\cdots+y_m$, where $m$ is another positive integer that may depend on $r$. With this in mind, we define the \emph{Schur-like number} \(S_m(r)\) to be the least \(N\) such that every \(r\)-coloring of \([N]\) contains a monochromatic solution to 
\begin{equation}\label{eq:schurlike}
    x_1+\cdots+x_{m+1}=y_1+\cdots+y_m,
\end{equation}
where the variables in a solution are allowed to repeat. In particular, this means that Schur-like numbers are non-increasing in $m$ for a fixed $r$. 

The current state of the art regarding Schur numbers $(m=1)$, which are closely related to multicolor Ramsey numbers of triangles, is
\[
380^{r/5} \ll S_1(r) \leq (e - 1/6)r!,
\]
see \cite{Abbott-Hanson1972, ACPPRT2022, Eliahou2020, Exoo1994, Fredricksen-Sweet2000, Wan1997, Whitehead1973, XXC2002}. 

For $m=2$, Cwalina and Schoen were the first to prove that $S_2(r) = o(r!)$ as $r\to\infty$ by establishing
\[
S_2(r) \le r^{-c \log r/ \log\log r} r!
\]
for a small absolute constant $c>0$ \cite{Cwalina-Schoen2017}. Ko\'sciuszko significantly strengthened their result in \cite{Kosciuszko2025}, using a lemma of Shearer \cite{Shearer1995} to show
\[
S_2(r) \le 3^r \sqrt{(r + 1)!}.
\]

For larger values of $m$, Schur-like numbers relate to multicolor Ramsey numbers of odd cycles. Miyazaki et al. generalized Ko\'sciuszko's bound and proved in \cite{MMPZ2026}
\[
S_m(r) \le (2m + 1)^r(r!)^{1/m} + 1.
\]
Their paper is based on and sharpens a very elegant paper of Axenovich et al. on multicolor Ramsey numbers of odd cycles \cite{ACJMR2026}.

We also note that Cwalina and Schoen \cite{Cwalina-Schoen2017} and Sanders \cite{Sanders2026a} have obtained comparable bounds for the class of partition regular equations.  

The purpose of this note is to strengthen the result of Miyazaki et al. by improving the dependence on $m$ and thus offer a sharper generalization of Ko\'sciuszko's bound.

\begin{theorem} \label{thm:schurlike}
    Let $r, m, N$ be positive integers. If 
    \[
    N \ge \prod_{k=1}^r (1 + 2 k^{1/m})
    \]
    and $\{1, \dots, N\}$ is colored using $r$ colors, then there exists a monochromatic solution to the equation $x_1+\cdots+x_{m+1}=y_1+\cdots+y_m$.
\end{theorem}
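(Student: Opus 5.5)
Our plan is to argue by induction on the number of colors, using a Schur-type "difference set" argument. The key step is to find, inside a coloured interval, a smaller interval-like set coloured with fewer colours. We prove the following stronger statement by induction on $k$. Suppose $A \subseteq \{1,\dots,N\}$ is a set of positive integers such that $A$ contains no monochromatic solution and $A$ is coloured with $k$ colours. Then $|A| < \prod_{j=1}^{k} (1+2j^{1/m})$. We need some structure on $A$ for the induction to close. As in Schur's classical proof, this structure should be that $A$ is a set of differences $\{a - a_0 : a \in B\}$ coming from a previous step. Since the equation is translation invariant with one extra term on the left, it suffices to keep track of sets $A$ of positive integers together with the property "no monochromatic solution".

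The inductive step goes as follows. Let $C_1,\dots,C_k$ be the colour classes of $A$, and pick the largest class $C$, so $|C| \ge |A|/k$. Let $mC - (m-1)C$ denote the set of sums $c_1+\dots+c_m - c'_1-\dots-c'_{m-1}$. If $C$ has no solution, then the set $D = mC - mC$ of nonzero elements (or the positive part of $(m+1)C - mC$, shifted appropriately) avoids $C$ in a strong sense. The elements $x$ with $x + (mC - mC) \cap C$ nonempty are forbidden, and this should force the sumset $mC - mC$ to be disjoint from colour $C$ inside a suitable range. Then the positive part of a large subset of $mC-mC$ lies in $\{1,\dots,N\}$, is coloured by only $k-1$ colours, and still has no monochromatic solution. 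The induction hypothesis then bounds its size.

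To relate sizes we need a lower bound of the form $|mC - mC| \gtrsim |C|^m/\,\mathrm{const}$, or rather a sharper statement of the shape
\[
|A| \le (1+2k^{1/m})\,\cdot \max |A'|,
\]
where $A'$ is the set passed to the next step. The factor $1+2k^{1/m}$ suggests the following. Let $T$ be the positive part of $mC - mC$, with $|T| = t$. Then $A \subseteq \{1,\dots,N'\}$ with roughly $|A| \le 2t+1$ in the interval picture. Meanwhile $|C|^m$ is at most on the order of $t$, a Shearer/Plünnecke-type counting bound giving $|C| \le t^{1/m}$-ish. So $|A| \le k|C|$ combined with these should yield a recursion. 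The precise form $1+2k^{1/m}$ indicates a cleaner scheme. Take $a = \max C$ and the interval structure: elements of $A$ are distinguished according to whether they lie in $C$ or in a translate region. The paper's improvement over $(2m+1)^r$ presumably replaces the crude bound $|mC-mC|\le(2m+1)\cdot$ by averaging over $k$ colours with a power-mean (Hölder) inequality. That is exactly where $k^{1/m}$ arises: $\sum_i |C_i| \le k^{1-1/m}(\sum |C_i|^m)^{1/m}$.

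So the concrete plan is as follows. (1) Define $f(k)$ as the maximum size of a $k$-coloured "good" set, and aim for $f(k) \le (1+2k^{1/m}) f(k-1)$. (2) For a good set $A$ with classes $C_i$, use the absence of solutions to show that $\sum_i |C_i|^m \le$ (something like) $k\,(f(k-1)/2)^{m}\cdot$ const. This is done by embedding, for each $i$, an injective image of $C_i^m$-tuples (via Shearer's lemma, following Kosciuszko) into sets coloured with the other colours. (3) Apply Hölder to get $|A| = \sum|C_i| \le 1 + 2k^{1/m} f(k-1)$-type bounds. The main obstacle is step (2): proving the correct "entropy" inequality, namely that $m$-fold sums from a solution-free class inject into a set avoiding that colour with the right constant 2 rather than $2m$. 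I would try first to adapt Shearer's lemma with the projection family of the $m$ coordinates, as Kosciuszko did for $m=2$.
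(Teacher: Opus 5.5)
Your plan has a genuine gap, and it lies in the quantitative core, not only in the step you flagged.

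\textbf{What fails.} First, the inductive statement is false as written. A single colour class can be an arbitrarily large solution-free set: take the odd numbers, since a sum of $m+1$ odd numbers and a sum of $m$ odd numbers have different parities. So the claim already fails at $k=1$. Your proposed rescue, sets of the form $\{a-a_0 : a\in B\}$, is just a translate of $B$ and imposes nothing.

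Second, the size relation $|C|^m\lesssim|mC-mC|$ is false even for solution-free classes. For $C=\{1,3,\dots,2n-1\}$ one has $|mC-mC|=2m(n-1)+1$, which is linear in $|C|$. This class genuinely occurs: it is colour~1 in the $2^r$ lower-bound colouring. So no Shearer- or Pl\"unnecke-type counting lemma can supply that growth in general.

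Third, the H\"older step does not produce $k^{1/m}$. From $\sum_i|C_i|^m\le k\,(Kf(k-1))^m$ you get $\sum_i|C_i|\le k^{1-1/m}\bigl(kK^mf(k-1)^m\bigr)^{1/m}=kKf(k-1)$. That is a factor $k$, giving an $r!$-type bound. Even without the factor $k$ you would get $k^{1-1/m}$, which is worse than $k^{1/m}$ for $m\ge 3$. So steps (1)--(3) cannot give $1+2k^{1/m}$ even if (2) were proved.

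The one correct ingredient is that $mA_c-mA_c$ avoids colour $c$. This is the content of the paper's independence lemma for the sets $Z_{i,j}\subseteq v+(i-j)A_c-jA_c$. It tells you where to recurse, but not how much weight survives.

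\textbf{What is missing.} The missing idea is the colour-degree weighting and expansion dichotomy of Axenovich et al. The paper counts colours on \emph{differences}, not on elements. It works with induced subgraphs $H$ of the complete graph on $[N+1]$ with $\col(xy)=\phi(|x-y|)$. Each vertex gets weight $\prod_{k\le d_H(u)}(1+2k^{1/m})^{-1}$, where $d_H(u)$ is its colour-degree, and the paper proves $w_H(V(H))\le1$ by induction on $|V(H)|$.

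Take $v$ of minimum colour-degree $d$ and a colour $c$ with $w(N_c(v))\ge (W-w(v))/d$. Then one of two things happens.
\begin{itemize}
\item Some layer has $w(N_c(Z_{i,j}))\le 2d^{1/m}w(Z_{i,j})$. Deleting $N_c(Z_{i,j})$ then costs no more than the vertices of $Z_{i,j}$ gain: each loses colour $c$, so its weight grows by a factor of at least $1+2d^{1/m}$.
\item Every layer expands by more than $2d^{1/m}$. Since $N_c(Z_{i,j})\subseteq Z_{i+1,j}\cup Z_{i+1,j+1}$, following the heavier branch gives growth $d^{1/m}$ per step. After $m-1$ steps this yields a colour-$c$-independent set $Z_{m,j_m}$ of weight exceeding $(W-w(v))/(2d^{1/m})$. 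Restricting to $H[Z_{m,j_m}]$ then forces $W<1$.
\end{itemize}
Thus $d^{1/m}$ is the $m$-th root of the pigeonhole loss $1/d$, spread over $m$ expansion steps. The factor $2$ comes from the two-way branching, adding or subtracting an element of $A_c$. The per-vertex weights are what make local deletions pay off. None of this is visible to a sumset-size or H\"older argument.
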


In particular,
\begin{equation} \label{eq:Smr}
S_m(r) < \prod_{k=1}^r (1 + 2 k^{1/m}) \le 3^r (r!)^{1/m}.
\end{equation}
For fixed $m$, the first inequality above in \eqref{eq:Smr} takes the following form:
\[
S_m(r) \leq 2^{r + o_{r \to \infty}(r)} (r!)^{1/m}.
\]
When $m / \log r \to \infty$, the second inequality in \eqref{eq:Smr} takes the form
\[
S_m(r) \leq 3^{r+o(r)}.
\]
This exponential-in-$r$ upper bound is of the correct shape because coloring odd integers with color 1, integers congruent to 2 modulo 4 with color 2, etc., shows that, for all $r, m$, $S_m(r) \geq 2^r$ \cite{MMPZ2026}. See \cite{Fox-Kleitman2006} for related coloring constructions.

It is worth noting here that the ``toy setting'' case where one seeks the minimum $n$ for which every $r$-coloring of $(\Z/2\Z)^n \setminus \{0\}$ has a monochromatic solution to \eqref{eq:schurlike} has been solved by Sanders in the regime where $m$ is logarithmic in $r$ \cite{Sanders-personal}. Sanders has proved that the minimum dimension $n$ equals $r+1$ provided that $m$ is a sufficiently large multiple of $\log(r)$. The proof of \cite[Theorem 1.2]{ACJMR2026} gives the slightly weaker upper bound $2^n \leq 2^{r+1} r^{r/m}$. See, for example, \cite[Section 2]{Sanders2026a} and the references therein for a justification as to why studying the ``toy setting'' may be beneficial; and \cite{Abbott-Hanson1972,Heule2018,Sanders2026b} for a detailed examination of appropriately defined modular Schur numbers. 

We conclude this introduction with a quick explanation of the main idea behind the proof of Theorem~\ref{thm:schurlike}. Like many proofs of Schur's theorem, we work with a naturally defined colored graph on vertex set $\{1,\dots, N+1\}$. There, as in Miyazaki et al. \cite{MMPZ2026}, we adapt the argument of Axenovich et al. \cite{ACJMR2026}. From a quantitative perspective, their argument has three steps: an application of the pigeonhole principle to locate a suitable color followed by a telescoping argument and then by another application of the pigeonhole principle. It is the third step, the second application of the pigeonhole principle, that we complete more efficiently using the structure of the underlying graph.

\section[Proof of Theorem 1]{Proof of Theorem~\ref{thm:schurlike}}

We begin by defining the colored complete graph we use. From now on $[N] = \{1, \dots, N\}$. Let
\(
  \phi:[N]\longrightarrow[r]
\)
be an $r$-coloring and $c$ a color. We denote by  \(A_{c} \subseteq [N]\) the set of elements that are colored with color \(c\).

We form the complete, simple, undirected graph $G$ on vertex set $[N+1]$ and color its edges by
\[
  \col(xy)=\phi(|x-y|).
\]
We will need to work in induced subgraphs of this graph $G$. Every induced subgraph is also a complete graph and inherits the edge coloring from the larger graph $G$. Now fix an induced subgraph \(H\), a vertex \(v \in V(H)\), and a color \(c\). For two integers \(0\leq j\leq i\), we define \(Z_{i,j}:= Z(H,v,c,i,j)\) to be the set of endpoints of walks 
\[
  v=v_0,v_1,\ldots,v_i=x
\] 
of length $i$ in $H$, all of whose edges have color $c$, such that exactly $(i-j)$ steps satisfy $v_\ell-v_{\ell-1}\in A_c$, and exactly $j$ steps satisfy $v_\ell-v_{\ell-1}\in -A_c$. Repeated vertices and backtracking are allowed. In particular, every element of $Z_{i,j}$ can be expressed as
\begin{equation}  
 x=v+a_{1}+\dots+a_{i-j}-a'_{1}-\dots-a'_{j},
\label{eq:charge-rep}
\end{equation}
where all the \(a_\ell\) and \(a'_\ell\) lie in \(A_c\). The set \(Z_{0,0}\) consists only of the vertex \(v\).

We record two simple yet important observations. The first is recorded in many places in the literature, e.g., in \cite{Kosciuszko2025}. The second, which is even simpler, is what allows efficient pigeonholing.

\begin{lemma}\label{lem:independence}
Let \(H\) be an induced subgraph of the graph $G$ defined at the beginning of the section, let \(v \in V(H)\), let \(c\) be a color incident to \(v\), and let $m$ be a positive integer. If there are no monochromatic solutions to the Schur-like equation \eqref{eq:schurlike}, then for every \(0 \leq j \leq i\leq m\) the set \(Z_{i,j}\) is color-\(c\) independent.
\end{lemma}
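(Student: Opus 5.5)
Suppose, for contradiction, that two distinct vertices $x,y\in Z_{i,j}$ are joined by an edge of color $c$. I will turn this edge into a monochromatic solution of \eqref{eq:schurlike} in color $c$.

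Without loss of generality $x>y$. Then $a:=x-y$ satisfies $\phi(a)=c$, so $a\in A_c$. By \eqref{eq:charge-rep} we can write
\[
x=v+a_1+\dots+a_{i-j}-a'_1-\dots-a'_j,\qquad y=v+b_1+\dots+b_{i-j}-b'_1-\dots-b'_j,
\]
with all $a_\ell,a'_\ell,b_\ell,b'_\ell\in A_c$. The $v$'s cancel in $x-y=a$, and rearranging gives
\[
a+b_1+\dots+b_{i-j}+a'_1+\dots+a'_j \;=\; a_1+\dots+a_{i-j}+b'_1+\dots+b'_j .
\]
The left side has $1+i$ summands from $A_c$ and the right side has $i$. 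This is a monochromatic solution of the equation with $i$ in place of $m$.

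To reach exactly $m+1$ versus $m$ variables, I pad both sides with the same element. Since $c$ is a color incident to $v$, the set $A_c$ is nonempty; fix any $t\in A_c$. Adding $m-i\ge 0$ copies of $t$ to each side gives an identity with $m+1$ terms on the left and $m$ on the right, all in $A_c$. Repetitions are allowed, so this is a monochromatic solution of \eqref{eq:schurlike}, contradicting the hypothesis.

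The argument has no serious obstacle. The only points needing care are choosing the orientation so that the edge difference $x-y$ is positive and hence lies in $A_c$ (it lies in $[N]$ because the vertex set is $[N+1]$), and the padding step, which is where the hypotheses $i\le m$ and "$c$ incident to $v$" are used.
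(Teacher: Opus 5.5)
Your proof is correct and follows the same route as the paper. You write both endpoints via \eqref{eq:charge-rep} and subtract to get a color-$c$ solution with $i+1$ versus $i$ summands. You then make explicit the padding step (adding $m-i$ copies of a common element of $A_c$ to both sides), which the paper leaves implicit in its remark that Schur-like numbers are non-increasing in $m$. One small point: padding does not actually need the hypothesis that $c$ is incident to $v$, since the edge difference $a$ already lies in $A_c$ and can serve as $t$.
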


\begin{proof} If $i=0$, which forces $j=0$, this is immediate since $Z_{i,j}=\{v\}$. Next assume $i \geq 1$ and consider two distinct elements \(u\) and \(w\) of \(Z_{i,j}\) with \(u<w\). Suppose that there exists a color-\(c\) edge between them. There must then exist $a \in A_c$ such that $w = u + a$. Plugging in the representations of \(u\) and \(w\) as in \eqref{eq:charge-rep} and rearranging, we obtain a solution to a Schur-like equation as in \eqref{eq:schurlike}, with \(i+1\) terms on one side and \(i\) terms on the other side. This is not possible, giving the contradiction we were after.
\end{proof}

\begin{lemma}\label{lem:branching}
Let \(H\) be an induced subgraph of the graph $G$ defined at the beginning of the section, let \(v \in V(H)\), and let \(c\) be a color incident to \(v\). For all \(0\leq j \leq i\), we have 
\[
 N_{c}^{H}(Z_{i,j}) \subseteq Z_{i+1,j} \cup Z_{i+1,j+1},
 \] 
 where
\(N_{c}^{H}(Z)\) denotes the color-\(c\) neighborhood of \(Z\) inside \(H\).
\end{lemma}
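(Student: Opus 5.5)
The plan is to argue directly from the definition of $Z_{i,j}$ by extending walks by one step. Take an arbitrary $y \in N_c^H(Z_{i,j})$. By definition of the color-$c$ neighborhood inside $H$, there is some $x \in Z_{i,j}$ with $y \in V(H)$, $y \neq x$, and $\col(xy) = c$. Since $x \in Z_{i,j}$, there is a walk $v = v_0, v_1, \ldots, v_i = x$ in $H$ whose edges all have color $c$. Exactly $i-j$ of its steps satisfy $v_\ell - v_{\ell-1} \in A_c$, and exactly $j$ of its steps satisfy $v_\ell - v_{\ell-1} \in -A_c$. We then append the edge $xy$ to obtain a walk $v_0, \ldots, v_i, v_{i+1} = y$ of length $i+1$ in $H$, all of whose edges have color $c$. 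This is a legitimate walk in $H$ because $H$ is an induced, hence complete, subgraph containing both $x$ and $y$. Repeated vertices and backtracking are explicitly permitted, so no further constraint needs checking.

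The remaining step is to classify the new step $y - x$. Since $\col(xy) = \phi(|x-y|) = c$, we have $|y - x| \in A_c$, so either $y - x \in A_c$ or $y - x \in -A_c$. These two cases are exclusive because $A_c \subseteq [N]$ consists of positive integers and $y \neq x$.

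\begin{itemize}
\item In the first case the extended walk has $(i-j)+1 = (i+1)-j$ steps in $A_c$ and $j$ steps in $-A_c$, so $y \in Z_{i+1,j}$.
\item In the second case it has $(i+1)-(j+1)$ steps in $A_c$ and $j+1$ steps in $-A_c$, so $y \in Z_{i+1,j+1}$.
\end{itemize}

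The index constraint $0 \le j' \le i+1$ holds in both cases, since $j \le i$. This gives the inclusion.

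There is no real obstacle here; the only point requiring care is bookkeeping, namely confirming that every step of a color-$c$ walk falls into exactly one of the two sign classes. That is what makes the counts $i-j$ and $j$ well defined and shifted correctly. Note also that the hypothesis that $c$ is incident to $v$ is not needed for the inclusion itself.
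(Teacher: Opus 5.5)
Your proof is correct and follows the paper's argument, which simply observes that a color-$c$ edge corresponds to adding or subtracting an element of $A_c$. You have written out the walk-extension and step-counting details behind that remark, and your observation that the hypothesis that $c$ is incident to $v$ is not needed for the inclusion is also right.
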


\begin{proof} 
This is immediate since a color-\(c\) edge corresponds to either adding or subtracting an element of \(A_{c}\).
\end{proof}

The next step in setting up the proof is to introduce weights on vertices. This novel ingredient is one of the key ideas in the work of Axenovich et al. \cite{ACJMR2026}. We further modify the slightly more efficient weights used by Miyazaki et al. \cite{MMPZ2026}.

The \emph{color-degree} \(d_{H}(u)\) of \(u \in V(H)\) is the number of distinct colors incident to $u$ in \(H\). We assign to \(u\) the following weight
\begin{equation} \label{eq:weights}
 w_H(u) = \prod_{k=1}^{d_H(u)} \frac{1}{(1+ 2 k^{1/m})}.
\end{equation}
 The total weight of
a subset \(U\subseteq V(H)\) is defined as
\[
 w_{H}(U)=\sum_{u\in U}w_{H}(u). 
 \]
The following upper bound on the total weight of an induced graph, a slightly stronger version of similar statements in \cite{ACJMR2026,MMPZ2026}, is the key result.

\begin{proposition} \label{prop:weight_le1}
Let $m\geq 1$ and \(H\) be an induced subgraph of the graph $G$ defined at the beginning of this section equipped with vertex weights as defined in \eqref{eq:weights}. If there are no monochromatic solutions to the Schur-like equation \eqref{eq:schurlike}, then \(w_{H}(V(H))\leq 1\).
\end{proposition}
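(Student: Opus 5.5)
The plan is to argue by induction on $|V(H)|$, in the style of Axenovich et al.\ \cite{ACJMR2026} and Miyazaki et al.\ \cite{MMPZ2026}. Two facts drive the induction. First, passing to an induced subgraph $H'\subseteq H$ can only decrease color-degrees, so $w_{H'}(u)\ge w_H(u)$ for every $u\in V(H')$. Second, if $U\subseteq V(H)$ is color-$c$ independent and every $u\in U$ is incident to color $c$ in $H$, then $d_{H[U]}(u)\le d_H(u)-1$. Hence
\[
w_H(u)\le \frac{w_{H[U]}(u)}{1+2d_H(u)^{1/m}},
\]
because the product defining $w_H(u)$ has one extra factor with index $k=d_H(u)$. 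The single-vertex case is trivial: $d_H(u)=0$ and the weight is $1$. For the induction step, assume $w_{H'}(V(H'))\le 1$ for all proper induced subgraphs.

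The first step is to fix a vertex $v$ and let $d=d_H(v)$. Then $V(H)\setminus\{v\}=\bigsqcup_{c}N^H_c(v)$, the union over the $d$ colors $c$ incident to $v$. The first pigeonhole chooses a color $c$ at $v$ whose neighbourhood carries weight at least the average share, $w_H(N^H_c(v))\ge (w_H(V(H))-w_H(v))/d$. Arguing by contradiction, this share would exceed $(1-w_H(v))/d$ if the proposition failed.

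Next, for this $c$, I would consider the layers $L_i=\bigcup_{j\le i}Z_{i,j}$ for $0\le i\le m$. Here $L_0=\{v\}$ and $L_1=Z_{1,0}\cup Z_{1,1}=N^H_c(v)$, and by \cref{lem:branching} the $c$-neighbourhood of $L_i$ lies in $L_{i+1}$. The telescoping step writes
\[
\frac{w_H(L_m)}{w_H(L_1)}=\prod_{i=1}^{m-1}\frac{w_H(L_{i+1})}{w_H(L_i)}.
\]
Since $w_H(L_1)$ is large relative to $w_H(v)$, some consecutive ratio, or the overall growth $w_H(L_m)/w_H(L_0)$, must be at least an $m$-th root of the corresponding quantity. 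This is where $k^{1/m}$ enters. On the other side, I would bound the weight of each single set $Z_{i,j}$ using \cref{lem:independence} and the induction hypothesis applied to $H[Z_{i,j}]$. The vertices of $Z_{i,j}$ with $i\ge1$ are incident to color $c$, so the second fact gives
\[
w_H(Z_{i,j})\le \max_{u\in Z_{i,j}}\bigl(1+2d_H(u)^{1/m}\bigr)^{-1}.
\]

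The efficient second pigeonhole is the step I expect to be the main obstacle. The naive route bounds $w_H(L_i)$ by summing over all $i+1$ values of $j$, which is what produces the factor $2m+1$ in \cite{MMPZ2026}. Instead, \cref{lem:branching} says that each $Z_{i,j}$ feeds only the two sets $Z_{i+1,j}$ and $Z_{i+1,j+1}$. So I would follow a single chain of indices $j$, at each level choosing the heavier of the two children, giving a path $Z_{0,0}\to Z_{1,j_1}\to\cdots\to Z_{m,j_m}$ with $j_{i+1}-j_i\in\{0,1\}$. Along this path the loss per level is a factor $2$ rather than $i+1$, which should yield the constant $1+2k^{1/m}$ in place of $1+(2m+1)k^{1/m}$. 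Combining the lower bound forced by the first pigeonhole with the upper bound on the endpoint set $Z_{m,j_m}$ should give a contradiction.

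The delicate points are twofold. One is controlling the degrees $d_H(u)$ of vertices in the chosen sets against $d$; I would take $v$ to be a vertex of maximum color-degree so that $d_H(u)\le d$ throughout. The other is checking that the product formula in \eqref{eq:weights} makes the bookkeeping close exactly, in the form
\[
w_H(v)\,\bigl(1+2d^{1/m}\bigr)=w_H(v)\cdot\frac{w_H(v)^{-1}}{\prod_{k<d}\bigl(1+2k^{1/m}\bigr)}.
\]
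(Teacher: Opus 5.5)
Your outline has the right skeleton: induction on $|V(H)|$, a first pigeonhole over the $d$ colors at $v$, a chain $Z_{1,j_1}\to\cdots\to Z_{m,j_m}$ that keeps the heavier of the two children from Lemma~\ref{lem:branching}, and a final passage to $H[Z_{m,j_m}]$ via Lemma~\ref{lem:independence}. However, the step that makes the chain \emph{grow} is missing. The telescoping identity gives no lower bound on the individual ratios; at most it says some factor is large if the product is. To beat the bound $w_H(Z_{m,j_m})\le(1+2d^{1/m})^{-1}$ that induction on $H[Z_{m,j_m}]$ gives, you need \emph{every} step to expand, i.e.\ $w(N_c(Z_{i,j_i}))>2d^{1/m}w(Z_{i,j_i})$ for $1\le i\le m-1$. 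Nothing in your plan forces this.

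The paper gets this expansion from a second, different use of the induction hypothesis. Suppose some nonempty $Z_{i,j}$ with $1\le i\le m$ has $w(N_c(Z_{i,j}))\le 2d^{1/m}w(Z_{i,j})$; then delete $N_c(Z_{i,j})$. The set $Z_{i,j}$ survives because it is color-$c$ independent. Each of its vertices loses color $c$ and so gains a factor at least $1+2d^{1/m}$, and no other weight drops. The smaller graph therefore has weight at least $W$, and induction gives $W\le 1$. In your contradiction framing, this deletion argument forces the expansion inequality at every level of a minimal counterexample. Only with this dichotomy does the chain yield $w(Z_{m,j_m})>d^{(m-1)/m}w(Z_{1,j_1})\ge (W-w(v))/(2d^{1/m})$.

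Second, choosing $v$ of \emph{maximum} color-degree goes the wrong way. Both the deletion step and the passage to $H[Z_{m,j_m}]$ need each $u$ in the relevant set to gain a factor at least $1+2d^{1/m}$, i.e.\ $d_H(u)\ge d$. That is why the paper takes $v$ of \emph{minimum} color-degree. With your choice, $\max_u(1+2d_H(u)^{1/m})^{-1}$ may be $1/3$ (a vertex of color-degree $1$), while the first pigeonhole still costs the full factor $1/d$. So for large $d$ no contradiction arises.

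Once both points are fixed, your closing observation $w_H(v)(1+2d^{1/m})\le 1$ does finish the argument. Combining the chain bound with $(1+2d^{1/m})w_H(Z_{m,j_m})\le 1$ gives $W<w_H(v)+1-(1+2d^{1/m})^{-1}\le 1$. This is a slightly more direct close than the paper's use of $w(v)\le 1/(3d^{1/m})$.
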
 

\begin{proof} 
We proceed by induction on \(|V(H)|\). If \(H\) has a single vertex, then the result is trivial.

Next suppose \(|V(H)|>1\) and set \(W:=w_{H}(V(H))\). To simplify notation, we suppress the dependence on $H$ and write $N_c^{H}$ as $N_c$, and $w_H$ as $w$, unless it is necessary to make a distinction. Choose a vertex \(v\) of minimum color-degree and put \(d=d_{H}(v)\). Next, by the pigeonhole principle, choose a color \(c\) that satisfies
\[
 w(N_{c}(v)) \geq \frac{W-w(v)}{d}.
 \] 
 We consider \(Z_{i,j}\) for \(0\leq i\leq m\) and \(0\leq j \leq i\). There are two complementary cases to consider.

\textbf{Case 1:} Suppose there exist \(1\leq i \leq m\) and \(0\leq j\leq i\) such that \(Z_{i,j}\neq\varnothing\) and 
\begin{equation} \label{eq:case1}
     w(N_{c}(Z_{i,j})) \leq 2 d^{1/m}w(Z_{i,j}). 
\end{equation}

We then delete \(N_{c}(Z_{i,j})\) and let \(H'\) be the subgraph of
\(H\) induced on \(V(H)\setminus N_{c}(Z_{i,j})\). By Lemma~\ref{lem:independence}, \(Z_{i,j}\) contains no color-\(c\) edges and hence survives the deletion. Crucially, every vertex in \(Z_{i,j}\) loses its color-\(c\) edges and therefore its color-degree decreases by at least 1. So, in the new induced graph $H'$ the weights of vertices in \(Z_{i,j}\) are multiplied by a factor of at least \((1+ 2 d^{1/m})\) (by their definition in \eqref{eq:weights} and the minimality of \(d\)). The weights of all other vertices do not decrease and thus,
 \begin{align*}
 w_{H'}(V(H'))  & \geq W - w_{H}(N_{c}^{H}(Z_{i,j}))+ ((1+ 2 d^{1/m})-1)w_{H}(Z_{i,j}) \\
  & \geq W  \qquad (\text{by \eqref{eq:case1}}).
 \end{align*} 

By the inductive hypothesis we have $w_{H'}(V(H')) \le 1$, and we get the desired upper bound \(W\leq 1\).

\textbf{Case 2:} Suppose that for all \( 1\leq i\leq m\) and \(0\leq j \leq i\) such that \(Z_{i,j}\neq\varnothing\), we have
\begin{equation} \label{eq:case2}
 w(N_{c}(Z_{i,j})) > 2 d^{1/m} w(Z_{i,j}).
\end{equation}

We start by setting \(Y_{0}:=Z_{0,0}=\{v\}\) and note that
\(N_{c}(v)=Z_{1,0} \cup Z_{1,1}\). So choose \(j_{1}\in \{ 0,1 \}\)
such that \[
 w(Z_{1,j_{1} })\geq \frac{1}{2} w(N_{c}(v)). \]

By the choice of \(c\) we have

\begin{equation}
 w(Z_{1,j_1})\geq \frac{W-w(v)}{2d}.
\label{eq:Z1}
\end{equation}

Set $Y_1 := Z_{1,j_1}$. We now iterate the following procedure. Suppose \(Y_i=Z_{i,j_i}\) is
nonempty for some \(1\leq i\leq m-1\). By \eqref{eq:case2}, we have

\begin{equation*}
w(N_c(Z_{i,j_i})) > 2 d^{1/m}w(Z_{i,j_i}).
\end{equation*}

Applying Lemma~\ref{lem:branching}, we also have
\begin{equation}\label{eq:branching}
    w(N_{c}(Z_{i,j_{i}})) \leq w(Z_{i+1,j_{i}})+w(Z_{i+1,j_{i}+1}).
\end{equation}

Choose \(j_{i+1}\in\{j_i,j_i+1\}\) so that
\(Y_{i+1}:=Z_{i+1,j_{i+1}}\) has at least as much weight as the other of the two
sets on the right-hand side of \eqref{eq:branching} (and in particular is nonempty). Thus, by combining
the two inequalities above, we have 
\[
 w(Z_{i+1,j_{i+1}}) > d^{1/m} w(Z_{i,j_{i}}).
 \]

Starting from level 1 and applying this \(m-1\) times, we obtain a set
\(Z_{m,j_{m}}\) such that
\begin{align*} 
w(Z_{m,j_{m}})  & >  d^{(m-1)/m}w(Z_{1,j_{1}})  \\
 & \geq \frac{W-w(v)}{2d^{1/m}},
\end{align*}
where we have used \eqref{eq:Z1}.

By Lemma~\ref{lem:independence}, there are no color-\(c\) edges inside
\(Z_{m,j_m}\), and, by definition, each vertex in \(Z_{m,j_m}\) has a color-\(c\) edge incident to it. We now pass to the induced subgraph \(H'=H[Z_{m,j_m}]\), where the color-degrees of vertices in \(Z_{m,j_m}\)
go down by at least 1 in $H'$, and hence their weights are multiplied by a factor of at least $(1+2d^{1/m})$ (by the minimality of $d$). Therefore
\begin{align*}
  w_{H'}(V(H'))  & \geq (1+2d^{1/m}) w_{H}(Z_{m,j_m})  \\
   & > \left(1 + \frac{1}{2 d^{1/m}}\right) (W-w_{H}(v)).
 \end{align*} 

 We now use two inequalities. The first is that for all vertices $v \in V(H)$
 \[
w_H(v) \leq \frac{1}{3} \dots \frac{1}{1+2 d^{1/m}} \leq \frac{1}{3 d^{1/m}}.
 \]
The second inequality, which follows from $\tfrac{1}{2}-\tfrac{1}{3} = \tfrac{1}{2} \cdot \tfrac{1}{3}$ and $d^{1/m} \ge 1$, is
\[
1+ \frac{1}{2 d^{1/m}} \geq \frac{1}{1-\frac{1}{3d^{1/m}}}.
\]
Substituting above yields
\[
w_{H'}(V(H')) \left(1-\frac{1}{3d^{1/m}}\right) > \left( W - \frac{1}{3 d^{1/m}}\right).
\]
Finally, since \(H'\) is a proper induced subgraph of \(H\), the inductive hypothesis gives \(w_{H'}(V(H')) \leq 1\) and the desired bound \(W<1\) follows, completing Case 2.

\end{proof}

Theorem~\ref{thm:schurlike} is an immediate corollary.

\begin{proof}[Proof of Theorem~\ref{thm:schurlike}]
We prove the contrapositive. Suppose that there are no monochromatic solutions to equation \eqref{eq:schurlike}. By Proposition~\ref{prop:weight_le1}, the total weight of the original graph $G$ is at most 1. Since every vertex has color-degree at most $r$, we have
\begin{align*}
    (N+1) \prod_{k=1}^r \frac{1}{1 + 2 k^{1/m}} \leq \sum_{i=1}^{N+1} w_G(i) \leq 1.
\end{align*}
\end{proof}

\begin{remark}
    More careful bookkeeping, together with the known values of small Schur numbers, yields 
    \[
    S_m(r) \leq 5 \prod_{k=3}^r (1 + 2^{1-1/m} k^{1/m}) .
    \]
\end{remark}

\subsubsection*{Acknowledgments}
The authors would like to thank Cosmin Pohoata and Tom Sanders for helpful conversations. Research on this project was supported by the Simons Foundation grant MPS-TSM-00007816. Part of the research was carried out when all four authors were visiting the HUN-REN Alfr\'ed R\'enyi Institute of Mathematics (Erd\H{o}s Center) during the Simons Workshop on Harmonic Analysis and Applications to Ramsey Theory. They thank the R\'enyi Institute for its hospitality and support. The first and third authors were supported by the HUN-REN Alfr\'ed R\'enyi Institute of Mathematics (Erd\H{o}s Center) while part of this project was carried out.

\subsubsection*{AI disclosure}
The authors used OpenAI's ChatGPT, specifically ChatGPT 5.6 Sol Ultra. The authors initially attempted to use directed graphs, which corresponds to solely using the sets $Z_{i,0}$. This choice, while allowing for efficient pigeonholing, created some problems. ChatGPT suggested using sets similar to the $Z_{i,j}$ that were eventually used. ChatGPT was also used in the preparation of the manuscript. The mathematical arguments presented are the authors' own, and the authors take full responsibility for the contents of the paper.

\phantomsection

\addcontentsline{toc}{section}{References}

\bibliographystyle{plain}


\bibliography{/Users/petridis/Dropbox/bib/all} 


\hspace{20pt} Department of Mathematics, University of Georgia, Athens, GA 30602, USA.

\end{document}